\numberwithin{equation}{section}
\theoremstyle{plain}
\newtheorem{theorem}{Theorem}[section]
\newtheorem{maintheorem}{Theorem}
\newtheorem{proposition}[theorem]{Proposition}
\newtheorem{lemma}[theorem]{Lemma}
\theoremstyle{remark}
\newtheorem{remark}[theorem]{Remark}
\newtheorem*{ack}{Acknowledgement}
\theoremstyle{definition}
\newenvironment{labeledlist}[2][\unskip]
{ 
  
  \begin{enumerate} }
{ \end{enumerate} }
\newcommand{\LL}{\mathcal{L}}
\newcommand{\MM}{\mathcal{M}}
\newcommand{\R}{\mathbb{R}}
\newcommand{\N}{\mathbb{N}}
\newcommand{\iii}{\mathtt{i}}
\newcommand{\jjj}{\mathtt{j}}
\newcommand{\kkk}{\mathtt{k}}
\newcommand{\fii}{\varphi}
\newcommand{\la}{\langle}
\newcommand{\ra}{\rangle}
\DeclareMathOperator{\GL}{{\it GL_d}(\R)}
\DeclareMathOperator{\dimh}{dim_H}
\DeclareMathOperator{\dimaff}{dim_{aff}}
\DeclareMathOperator{\linspan}{span}
\renewcommand{\atop}[2]{\genfrac{}{}{0pt}{}{#1}{#2}}
\begin{document}

\title[Genericity of dimension drop on self-affine sets]{Genericity of dimension drop on self-affine sets}

\author{Antti K\"aenm\"aki}
\address{Department of Mathematics and Statistics \\
         P.O. Box 35 (MaD) \\
         FI-40014 University of Jyv\"askyl\"a \\
         Finland}
\email{antti.kaenmaki@jyu.fi}

\author{Bing Li$^*$}
\address{Department of Mathematics \\
         South China University of Technology \\
         Guangzhou, 510641 \\
         P.R.\ China}
\email{scbingli@scut.edu.cn}

\thanks{AK thanks the foreign expert visit program of the SCUT. BL acknowledges the support of NSFC 11671151 and Guangdong Natural Science Foundation 2014A030313230.}
\thanks{*Corresponding author.}
\subjclass[2000]{Primary 28A80; Secondary 37C45, 37D35.}
\keywords{Thermodynamic formalism, singular value function, products of matrices, self-affine set}
\date{\today}

\begin{abstract}
  We prove that generically, for a self-affine set in $\R^d$, removing one of the affine maps which defines the set results in a strict reduction of the Hausdorff dimension. This gives a partial positive answer to a folklore open question.
\end{abstract}

\maketitle

\section{Introduction and statement of main results}

Let $f_i \colon \R^d \to \R^d$ be a contractive invertible mapping for all $i \in \{ 1,\ldots,N \}$. Throughout of the paper, let $N$ be an integer such that $N \ge 2$. By Hutchinson \cite{Hutchinson1981}, there exists a unique nonempty compact set $E \subset \R^d$ satisfying
\begin{equation*}
  E = \bigcup_{i=1}^N f_i(E).
\end{equation*}
If the mappings $f_i$ are affine, then the set $E$ is called \emph{self-affine}. For every $\mathsf{v} = (v_1,\ldots,v_N) \in (\R^d)^N$ and $\mathsf{A} = (A_1,\ldots,A_N) \in GL_d(\R)^N$ with $\|A_i\| < 1$ for all $i \in \{ 1,\ldots,N \}$ we denote the self-affine set corresponding to $\mathsf{A}$ and $\mathsf{v}$ by
\begin{equation*}
  E_{\mathsf{A},\mathsf{v}} = \bigcup_{i=1}^N A_i(E_{\mathsf{A},\mathsf{v}})+v_i.
\end{equation*}
Then, by setting $\mathsf{A}' = (A_1,\ldots,A_{N-1}) \in GL_d(\R)^{N-1}$ and $\mathsf{v}' = (v_1,\ldots,v_{N-1}) \in (\R^d)^{N-1}$, the self-affine set
\begin{equation*}
  E_{\mathsf{A}',\mathsf{v}'} = \bigcup_{i=1}^{N-1} A_i(E_{\mathsf{A}',\mathsf{v}'})+v_i \subset E_{\mathsf{A},\mathsf{v}}
\end{equation*}
is obtained from $E_{\mathsf{A},\mathsf{v}}$ by removing one of the affine maps
which defines the set.

A folklore conjecture suggests that $\dimh(E_{\mathsf{A}',\mathsf{v}'}) < \dimh(E_{\mathsf{A},\mathsf{v}})$. There exist simple counter-examples showing that this cannot be the case for all self-affine sets; for example, see \cite[Example 9.3]{KaenmakiMorris2016}. Therefore the conjecture is about generic behavior. During recent years, the question has been propagated by Schmeling. It follows from Feng and K\"aenm\"aki \cite[\S 3]{FengKaenmaki2011} and Falconer \cite[Theorem 5.3]{Falconer1988} that the conjecture holds in $\R^2$ for Lebesgue almost every choice of translation vectors $\mathsf{v}$. Very recently, K\"aenm\"aki and Morris \cite[Theorem B]{KaenmakiMorris2016} solved the problem in dimension three. They showed that the conjecture holds in $\R^3$ again for Lebesgue almost every choice of translation vectors $\mathsf{v}$. We remark that, by Falconer and Miao \cite[Theorem 2.5]{FalconerMiao2007} and Falconer \cite[Theorem 5.3]{Falconer1988}, the conjecture holds in arbitrary dimension whenever the matrix tuple is upper triangular.

In this note, we present a simple proof to show that the conjecture holds in arbitrary dimension for a generic choice of the matrix tuple. This is a partial solution to the problem since one expects the conjecture to be true for all matrix tuples. At least, it is the case for rational dimensions:

\begin{maintheorem} \label{thm:schmeling-rational}
  Let $0 \le s \le d$ be rational and $\mathsf{A} = (A_1,\ldots,A_N) \in GL_d(\R)^N$ be such that $P_{\mathsf{A}}(\fii^s)=0$ and $\| A_i \| < 1/2$ for all $i \in \{ 1,\ldots,N \}$. Then
  \begin{equation*}
    \dimh(E_{\mathsf{A}',\mathsf{v}'}) < \dimh(E_{\mathsf{A},\mathsf{v}}) = s
  \end{equation*}
  for $\LL^{dN}$-almost all $\mathsf{v} \in \R^{dN}$.
\end{maintheorem}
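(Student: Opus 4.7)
The plan is to split the theorem into an almost-sure part, handled by Falconer's dimension formula, and a deterministic comparison of affinity dimensions. \emph{Reducing to a pressure inequality.} Falconer's theorem, extended to the contraction bound $\|A_i\| < 1/2$ by Solomyak, gives
\begin{equation*}
  \dimh(E_{\mathsf{A},\mathsf{v}}) = \min\{s(\mathsf{A}),d\}
\end{equation*}
for $\LL^{dN}$-almost every $\mathsf{v} \in \R^{dN}$, where $s(\mathsf{A})$ is the affinity dimension. The hypotheses $P_{\mathsf{A}}(\fii^s) = 0$ and $s \le d$ force $s(\mathsf{A}) = s$, whereas the elementary cover upper bound
\begin{equation*}
  \dimh(E_{\mathsf{A}',\mathsf{v}'}) \le s(\mathsf{A}')
\end{equation*}
holds for every $\mathsf{v}'$. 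Writing $\mathsf{v} = (\mathsf{v}', v_N) \in \R^{d(N-1)} \times \R^d$ and invoking Fubini, the theorem reduces to the purely deterministic strict inequality $s(\mathsf{A}') < s$, or equivalently $P_{\mathsf{A}'}(\fii^s) < 0$.

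\emph{The strict pressure drop.} The remaining task is to show that removing $A_N$ strictly decreases the sub-additive pressure at the critical exponent. Here I would invoke the thermodynamic formalism for the singular value function developed by K\"aenm\"aki and by Feng--K\"aenm\"aki: when $s$ is rational, this theory provides an ergodic equilibrium state $\mu$ for $\fii^s$ on the full shift $\{1,\ldots,N\}^{\N}$ that assigns strictly positive mass to every cylinder, in particular to $[N]$. If we had $P_{\mathsf{A}'}(\fii^s) = 0$, then the variational principle applied to $\mathsf{A}'$ would furnish a second equilibrium state supported on the proper closed sub-shift $\{1,\ldots,N-1\}^{\N}$, which vanishes on $[N]$. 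The resulting non-uniqueness contradicts the rigidity granted by the rational exponent, so $P_{\mathsf{A}'}(\fii^s) < 0$ must hold, completing the argument.

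\emph{Main obstacle.} The delicate point is the second step. Without the rationality hypothesis, the equilibrium state for $\fii^s$ may fail to be unique or to charge every cylinder, and indeed \cite[Example 9.3]{KaenmakiMorris2016} shows that a Hausdorff dimension drop is not universal among self-affine sets. Rationality of $s$ is what enables one to linearize $\fii^s$ via exterior and tensor constructions, thereby producing a Gibbs-type equilibrium state with full support whose essential uniqueness powers the contradiction. Carrying out this linearization carefully, and verifying that the resulting Gibbs measure genuinely assigns positive mass to the cylinder $[N]$, is the technical crux of the proof.
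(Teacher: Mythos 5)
Your reduction to the strict pressure inequality $P_{\mathsf{A}'}(\fii^s) < 0$ via Falconer's almost-sure dimension formula matches the paper, and you have also correctly identified the linearization of $\fii^s$ (writing $s = p/q$ and passing to exterior and Kronecker powers so that $\fii^s(A) = \|A'\|^{1/q}$) as the mechanism by which rationality is used. However, the contradiction you invoke in the second step is the wrong one, and as stated the argument does not go through.

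You claim that the rational exponent yields ``essential uniqueness'' of the $\fii^s$-equilibrium state, and that a second equilibrium state supported on $\{1,\ldots,N-1\}^\N$ would violate it. But rationality of $s$ does \emph{not} give uniqueness. What the linearization buys you, via Feng--K\"aenm\"aki's result on norm pressures (cited in the paper as Theorem~\ref{thm:FeKa}), is that there are at most finitely many ergodic $\fii^s$-equilibrium states of $\mathsf{A}$ and that \emph{all of them are fully supported}; this is exactly Theorem~\ref{thm:rational-eq}. Uniqueness is not asserted and can genuinely fail for reducible tuples. The correct contradiction is therefore with full support, not with uniqueness: if $P_{\mathsf{A}'}(\fii^s) = P_{\mathsf{A}}(\fii^s)$, then an equilibrium state of $\mathsf{A}'$, pushed forward to $\Sigma = \{1,\ldots,N\}^\N$, would again be an $\fii^s$-equilibrium state of $\mathsf{A}$ yet would assign zero mass to $[N]$, contradicting that \emph{every} such equilibrium state is fully supported. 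This is precisely Proposition~\ref{thm:pressure-drop} applied with Theorem~\ref{thm:rational-eq} as the hypothesis. Your phrasing (``non-uniqueness contradicts the rigidity'', ``essential uniqueness powers the contradiction'') suggests a misremembered form of the result and, taken literally, would be an invalid deduction. If you replace ``uniqueness'' by ``full support of all equilibrium states'' throughout, your outline becomes the paper's proof.
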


The proof of Theorem \ref{thm:schmeling-rational} relies on understanding the structure and properties of equilibrium states obtained from the associated sub-additive dynamical system. Equilibrium state is a probability measure maximizing the dimension and it will be defined in \S \ref{sec:eq-state}.

For irrational dimensions, we verify the conjecture in an open and dense set of matrix tuples. The proof of this result is based on a property of matrix tuples called $s$-irreducibility. It is a sufficient condition for the uniqueness of the equilibrium state; see Theorem \ref{thm:unique-eq-new}.

\begin{maintheorem} \label{thm:schmeling-generic}
  For $\LL^{d^2N}$-almost every $\mathsf{A} = (A_1,\ldots,A_N) \in GL_d(\R)^N$ with $P_{\mathsf{A}}(\fii^d) \le 0$ and $\| A_i \| < 1/2$ for all $i \in \{ 1,\ldots,N \}$ it holds that
  \begin{equation*}
    \dimh(E_{\mathsf{A}',\mathsf{v}'}) < \dimh(E_{\mathsf{A},\mathsf{v}})
  \end{equation*}
  for $\LL^{dN}$-almost all $\mathsf{v} \in \R^{dN}$. In particular, the exceptional set of tuples $\mathsf{A} \in GL_d(\R)^N$ for which the conclusion does not hold is contained in a finite union of $(d^2N-1)$-dimensional algebraic varieties and thus, has Hausdorff dimension at most $d^2N-1$. 
\end{maintheorem}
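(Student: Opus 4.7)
The plan is to reduce Theorem~\ref{thm:schmeling-generic} to a strict drop in the affinity dimensions $s(\mathsf{A}') < s(\mathsf{A})$, where $s(\mathsf{A})$ denotes the unique $s \ge 0$ with $P_{\mathsf{A}}(\fii^s) = 0$ (so $s(\mathsf{A}) \le d$ by hypothesis, and generically $s(\mathsf{A}) < d$ since the locus $\{P_{\mathsf{A}}(\fii^d) = 0\}$ has zero $\LL^{d^2N}$-measure). Indeed, Falconer's almost sure dimension formula, whose use of the assumption $\|A_i\| < 1/2$ is built into our hypothesis, yields $\dimh(E_{\mathsf{A},\mathsf{v}}) = s(\mathsf{A})$ and the upper bound $\dimh(E_{\mathsf{A}',\mathsf{v}'}) \le s(\mathsf{A}')$ for $\LL^{dN}$-a.e.\ $\mathsf{v}$; consequently it suffices to prove $s(\mathsf{A}') < s(\mathsf{A})$ for $\LL^{d^2N}$-a.e.\ admissible $\mathsf{A}$.

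For the strict drop I would exploit uniqueness of the equilibrium state through a contradiction argument. Generically $\mathsf{A}$ is $s$-irreducible at $s := s(\mathsf{A})$, so Theorem~\ref{thm:unique-eq-new} supplies a unique equilibrium state $\mu$ for the potential $\fii^s$, and the Gibbs-type property that typically accompanies $s$-irreducibility forces $\mu$ to have full support on $\{1,\ldots,N\}^{\N}$. Since passing to a sub-alphabet can only decrease the subadditive pressure, we automatically have $s(\mathsf{A}') \le s(\mathsf{A})$. If equality held, then $P_{\mathsf{A}'}(\fii^s) = P_{\mathsf{A}}(\fii^s) = 0$, and any equilibrium state $\nu$ for $\mathsf{A}'$ at parameter $s$, regarded as a shift-invariant measure on $\{1,\ldots,N\}^{\N}$ supported in $\{1,\ldots,N-1\}^{\N}$, would attain the same pressure with respect to $\mathsf{A}$ and hence would itself be an equilibrium state for $\mathsf{A}$. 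Uniqueness would force $\nu = \mu$, contradicting the full support of $\mu$, since $\nu$ assigns no mass to the cylinder determined by the removed letter. Therefore $s(\mathsf{A}') < s(\mathsf{A})$.

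Finally I would identify the exceptional set. Failure of $s$-irreducibility for some $s \in [0,d]$ reduces, via the standard passage to exterior powers, to the existence of a common proper invariant subspace of $A_1,\ldots,A_N$ acting on $\Wedge^k \R^d$ for some $k \in \{1,\ldots,d-1\}$. For each such $k$ this is a Zariski-closed, proper condition on $\mathsf{A} \in GL_d(\R)^N \subset \R^{d^2N}$, cutting out a proper algebraic variety of dimension at most $d^2N - 1$; the union over the finitely many relevant $k$ then contains the exceptional set and delivers the claimed dimension bound. The step I expect to require the most care is extracting the full support of $\mu$ from $s$-irreducibility, which is a Gibbs-type fact in the spirit of \cite{FengKaenmaki2011, KaenmakiMorris2016}; the rest is a clean application of Falconer's formula combined with a uniqueness-by-contradiction.
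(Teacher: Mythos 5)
Your reduction via Falconer's formula to a strict drop $s(\mathsf{A}')<s(\mathsf{A})$ in affinity dimension, and your uniqueness-plus-full-support contradiction (extend an equilibrium state $\nu$ for $\mathsf{A}'$ by zero, invoke uniqueness to force $\nu=\mu$, contradict the full support of $\mu$), are both correct and essentially reproduce Proposition~\ref{thm:pressure-drop} and its application; this part matches the paper.

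However, the step establishing generic $s$-irreducibility has a genuine gap. You claim that failure of $s$-irreducibility ``reduces, via the standard passage to exterior powers, to the existence of a common proper invariant subspace of $A_1,\ldots,A_N$ acting on $\Wedge^k\R^d$.'' That is false: for $k<s<k+1$, $s$-irreducibility is strictly stronger than the conjunction of $k$-irreducibility and $(k+1)$-irreducibility (the paper notes this after Lemma~\ref{thm:irr_equiv1}, citing a counterexample from \cite{KaenmakiMorris2016}), so irreducibility of each $\mathsf{A}^{\wedge k}$ does not yield $s$-irreducibility. The correct algebraic reformulation (Lemma~\ref{thm:not-s-irr}) passes through the tensor product $\wedge^k\R^d\otimes\wedge^{k+1}\R^d$, but then you run headlong into Remark~\ref{rem:morris}: the family $\{A^{\wedge k}\otimes A^{\wedge(k+1)}\}$ is \emph{never} irreducible (e.g.\ $d=3$, $k=1$), so ``the tensor product is irreducible'' is the empty condition and cannot serve as your generic hypothesis. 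Even aside from this, identifying ``$\mathsf{A}_1,\ldots,A_N$ have a common proper invariant subspace on $\Wedge^k\R^d$'' as a proper Zariski-closed locus requires an argument (Burnside-type considerations) that you do not supply. The paper circumvents all of this by introducing an explicit polynomial \emph{eigenvalue condition} on a pair $(A_i,A_j)$ (Proposition~\ref{thm:eigenvalue-cond}), showing it cuts out a finite union of $(d^2N-1)$-dimensional varieties, and then invoking \cite{JJLS2014} to deduce the Falconer--Sloan condition $C(s)$, hence $s$-irreducibility simultaneously for all $0\le s\le d$ (Proposition~\ref{thm:eigenvalue-irred}). Without some replacement for this machinery, your genericity claim does not stand.
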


In the above theorems, $P_{\mathsf{A}}$ is the singular value pressure of $\mathsf{A}$ defined in \eqref{eq:topo_def}.

\begin{remark}
  Since $P_{\mathsf{A}}(\fii^s)$ is continuous as a function of $s$ and also, by Feng and Shmerkin \cite[Theorem 1.2]{FengShmerkin2014}, as a function of $\mathsf{A}$, it is tempting to try to solve the full conjecture just by taking a limit. Unfortunately, this approach does not seem to work -- at least not without any further modifications.
\end{remark}

\section{Preliminaries}

\subsection{Set of infinite words}
Fix $N \in \N$ such that $N \ge 2$ and equip the set of all infinite words $\Sigma = \{ 1,\ldots,N \}^\N$ with the usual ultrametric: the distance between two different words is defined to be $2^{-n}$, where $n$ is the first place at which the words differ. It is straightforward to see that $\Sigma$ is compact. The \emph{left shift} is a continuous map $\sigma \colon \Sigma \to \Sigma$ defined by setting $\sigma(\iii) = i_2 i_3 \cdots$ for all $\iii = i_1 i_2 \cdots \in \Sigma$.

Let $\Sigma_*$ be the free monoid on $\{ 1,\ldots,N \}$. The concatenation of two words $\iii \in \Sigma_*$ and $\jjj \in \Sigma_* \cup \Sigma$ is denoted by $\iii\jjj \in \Sigma_* \cup \Sigma$. The set $\Sigma_*$ is the set of all finite words $\{ \varnothing \} \cup \bigcup_{n \in \N} \Sigma_n$, where $\Sigma_n = \{ 1,\ldots,N \}^n$ for all $n \in \N$ and $\varnothing$ satisfies $\varnothing\iii = \iii\varnothing = \iii$ for all $\iii \in \Sigma_*$. For notational convenience, we set $\Sigma_0 = \{ \varnothing \}$. The word $i_2 \cdots i_n \in \Sigma_{n-1}$ is denoted by $\sigma(\iii)$ for all $n \in \N$ and $\iii = i_1 \cdots i_n \in \Sigma_n$.

The length of $\iii \in \Sigma_* \cup \Sigma$ is denoted by $|\iii|$. If $\iii \in \Sigma_*$, then we set
$
  [\iii] = \{ \iii\jjj \in \Sigma : \jjj \in \Sigma \}
$
and call it a \emph{cylinder set}. If $\jjj \in \Sigma_* \cup \Sigma$ and $1 \le n < |\jjj|$, we define $\jjj|_n$ to be the unique word $\iii \in \Sigma_n$ for which $\jjj \in [\iii]$. If $\jjj \in \Sigma_*$ and $n \ge |\jjj|$, then $\jjj|_n = \jjj$. 

\subsection{Multilinear algebra} \label{sec:multilinear}
We recall some basic facts about the exterior algebra and tensor products.
Let $\{ e_1,\ldots,e_d \}$ be the standard orthonormal basis of $\R^d$ and define
\begin{equation*}
  \wedge^k \R^d = \linspan\{ e_{i_1} \wedge \cdots \wedge e_{i_k} : 1 \le i_1 < \cdots < i_k \le d \}
\end{equation*}
for all $k \in \{ 1,\ldots,d \}$ with the convention that $\wedge^0\R^d=\mathbb{R}$. Recall that the wedge product $\wedge \colon \wedge^k\R^d \times \wedge^j\R^d \to \wedge^{k+j}\R^d$ is an associative and bilinear operator, anticommutative on the elements of $\R^d$.
For each $v \in \wedge^k\R^d$ and $1 \le i_1 < \cdots < i_k \le d$ there exists a real number $v_{i_1 \cdots i_k}$ so that
\begin{equation} \label{eq:coefficients}
  v = \sum_{1 \le i_1 < \cdots < i_k \le d} v_{i_1 \cdots i_k} e_{i_1} \wedge \cdots \wedge e_{i_k}
\end{equation}
Moreover, if $v_j = (v_j^1,\ldots,v_j^d) \in \R^d$ for all $j \in \{ 1,\ldots,k \}$, then
\begin{equation*} 
  v_1 \wedge \cdots \wedge v_k = \sum_{1 \le i_1 < \cdots < i_k \le d} \det
  \begin{pmatrix}
    v_1^{i_1} & \cdots & v_1^{i_k} \\
    \vdots & \ddots & \vdots \\
    v_k^{i_1} & \cdots & v_k^{i_k}
  \end{pmatrix}
  e_{i_1} \wedge \cdots \wedge e_{i_k}.
\end{equation*}
If $v \in \wedge^k\R^d$ can be expressed as a wedge product of $k$ vectors of $\R^d$, then $v$ is said to be \emph{decomposable}. Observe that e.g.\ $e_1 \wedge e_2 + e_3 \wedge e_4 \in \wedge^2\R^4$ is not decomposable.

The group of $d \times d$ invertible matrices of real numbers is denoted by $GL_d(\R)$. This space has a topology induced from $\R^{d^2}$. If $A \in GL_d(\R)$, we define an invertible linear map $A^{\wedge k} \colon \wedge^k\R^d \to \wedge^k\R^d$ by setting
\[
(A^{\wedge k})(e_{i_1} \wedge \cdots \wedge e_{i_k}) = Ae_{i_1} \wedge \cdots \wedge Ae_{i_k}
\]
and extending by linearity. Observe that $A^{\wedge k}$ can be represented by a $\binom{d}{k} \times \binom{d}{k}$ matrix whose entries are the $k \times k$ minors of $A$. Using this and standard properties of determinants, it may be shown that
\begin{equation} \label{eq:morphism}
  (AB)^{\wedge k} = (A^{\wedge k})(B^{\wedge k}),
\end{equation}
i.e.\ $A \mapsto A^{\wedge k}$ is a morphism between the corresponding multiplicative linear groups. Furthermore, if $\alpha_1(A) \ge \cdots \ge \alpha_d(A) > 0$ are the singular values of $A$, that is, the square roots of the eigenvalues of the positive definite matrix $A^TA$, where $A^T$ is the transpose of $A$, then the products $\alpha_{i_1}(A) \cdots \alpha_{i_k}(A)$ are the singular values of $A^{\wedge k}$, for each $1 \le i_1 < \cdots < i_k \le d$. 

The inner product on $\wedge^k\R^d$ is defined by setting
\begin{equation*}
  \langle v,w \rangle_k = \sum_{1 \le i_1 < \cdots < i_k \le d} v_{i_1 \cdots i_k} w_{i_1 \cdots i_k}
\end{equation*}
for all $v,w \in \wedge^k\R^d$, where $v_{i_1 \cdots i_k}$ and $w_{i_1 \cdots i_k}$ are the coefficients of the corresponding linear combinations; see \eqref{eq:coefficients}.
The norm is defined by setting $|v|_k = \langle v,v \rangle_k^{1/2}$ for all $v \in \wedge^k\R^d$. It follows that $|v_1 \wedge \cdots \wedge v_k|_k$ is the $k$-dimensional volume of the parallelepiped with the vectors $v_1,\ldots,v_k$ as sides. The operator norm of the induced linear mapping $A^{\wedge k}$ is
\begin{equation} \label{eq:operator_norm_for_wedge}
  \| A^{\wedge k} \|_k = \max\{ |A^{\wedge k}v|_k : |v|_k = 1 \} = \alpha_1(A) \cdots \alpha_k(A).
\end{equation}

The \emph{tensor product} of two inner product spaces $V$ and $W$ over $\R$ is the inner product space $V \otimes W$. Its elements are equivalence classes of formal sums of vectors in $V \times W$ with coefficients in $\R$ under a natural equivalence relation.
If $v \in V$ and $w \in W$, then the equivalence class of $(v,w)$ is denoted by $v \otimes w$, which is called the \emph{tensor product} of $v$ with $w$. An element of $V \otimes W$ is called \emph{decomposable} if it can be expressed as a tensor product of two vectors in $V$ and $W$. Observe that if $v_1,v_2 \in V$ and $w_1,w_2 \in W$ are both linearly independent, then $v_1 \otimes w_1 + v_2 \otimes w_2$ is not decomposable. If $\{ e_i \}_i$ is a basis for $V$ and $\{ e_j' \}_j$ is a basis for $W$, then $\{ e_i \otimes e_j' \}_{i,j}$ is a basis for $V \otimes W$. The dimension of the tensor product space therefore is the product of dimensions of the original spaces.

The inner product on $V \otimes W$ is defined by
\begin{equation*}
  \langle v_1 \otimes w_1, v_2 \otimes w_2 \rangle = \langle v_1 , v_2 \rangle \langle w_1, w_2 \rangle
\end{equation*}
for decomposable elements of $V \otimes W$ and by bilinear extension for general elements of $V \otimes W$. If $T \colon V \to V$ and $U \colon W \to W$ are linear maps, then, by setting
\begin{equation*}
  T \otimes U(v \otimes w) = T(v) \otimes U(w)
\end{equation*}
for decomposable elements and extending by linearity, defines a linear map $T \otimes U \colon V \otimes W \to V \otimes W$ which is called the \emph{tensor product} of $T$ and $U$. If the linear maps $T$ and $U$ are considered to be matrices, then the matrix describing the tensor product $T \otimes U$ is the usual \emph{Kronecker product} of the two matrices. Since the norm is defined by $|v| = \langle v,v \rangle^{1/2}$ for all $v \in V \otimes W$ the operator norm of the tensor product of $T$ and $U$ is the product of the operator norms of $T$ and $U$, i.e.\
\begin{equation} \label{eq:tensor_norm_product}
  \| T \otimes U \| = \| T \| \| U \|.
\end{equation}

\subsection{Irreducibility}
Let $\mathcal{A}$ be a set of matrices in $GL_d(\R)$. We say that $\mathcal{A}$ is \emph{irreducible} if there is no proper nontrivial linear subspace $V$ of $\R^d$ such that $A(V) \subset V$ for all $A \in \mathcal{A}$; otherwise $\mathcal{A}$ is called \emph{reducible}.
A tuple $\mathsf{A} = (A_1,\ldots,A_N) \in GL_d(\R)^N$ is \emph{irreducible}
if the corresponding set $\{ A_1,\ldots,A_N \}$ is irreducible.
If $\mathsf{A}^{\wedge k} = (A^{\wedge k}_1, \ldots, A^{\wedge k}_N)$ is irreducible
for some $k \in \{ 0,\ldots,d \}$, then we say that $\mathsf{A}$ is \emph{$k$-irreducible}.
For each $n \in \N$ and $\iii = i_1 \cdots i_n \in \Sigma_n$ we write $A_\iii = A_{i_1} \cdots A_{i_n} \in GL_d(\R)$.

\begin{lemma} \label{thm:irr_equiv1}
  If $\mathsf{A} = (A_1,\ldots,A_N) \in GL_d(\R)^N$, then the following conditions are equivalent:
  \begin{enumerate}
    \item The tuple $\mathsf{A}$ is irreducible.
    \item For every $0\neq v,w \in \R^d$ there is $\iii \in \Sigma_*$ such that $\langle v,A_\iii w \rangle \ne 0$.
    \item For every $0 \ne w \in \R^d$ it holds that $\linspan(\{ A_\iii w : \iii \in \Sigma_* \}) = \R^d$.
    \item The set $\{ A_\iii : \iii \in \Sigma_* \}$ is irreducible.
  \end{enumerate}
\end{lemma}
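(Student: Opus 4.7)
The plan is to close the implications in the cycle $(1) \Rightarrow (3) \Rightarrow (2) \Rightarrow (1)$ and then dispatch $(1) \Leftrightarrow (4)$ as an essentially tautological observation. All four conditions turn out to be restatements of the same invariant-subspace condition, viewed either multiplicatively, via orbits, or via duality, so there should be no serious obstacle; the only technical nuisance is keeping track of the role of the empty word $\varnothing$, which I handle by the convention $A_\varnothing = I$.

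For $(1) \Rightarrow (3)$, I fix $0 \neq w \in \R^d$ and consider the linear subspace
\[
  V_w = \linspan(\{ A_\iii w : \iii \in \Sigma_* \}) \subset \R^d.
\]
By the semigroup identity $A_i A_\iii = A_{i\iii}$ the subspace $V_w$ is invariant under every $A_i$, and it is nontrivial since $w = A_\varnothing w \in V_w$. Irreducibility of $\mathsf{A}$ then forces $V_w = \R^d$, which is exactly $(3)$. For $(3) \Rightarrow (2)$ I argue by duality: if $(2)$ failed, there would exist nonzero $v,w$ with $\langle v, A_\iii w\rangle = 0$ for all $\iii \in \Sigma_*$, i.e.\ $v$ would be orthogonal to $V_w$; but $(3)$ gives $V_w = \R^d$, forcing $v = 0$, a contradiction.

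For $(2) \Rightarrow (1)$, I assume towards a contradiction that there is a proper nontrivial subspace $V \subset \R^d$ with $A_i(V) \subset V$ for every $i$. Picking any $0 \neq w \in V$ and any nonzero $v$ in the orthogonal complement $V^\perp$, induction on $|\iii|$ gives $A_\iii w \in V$ for every $\iii \in \Sigma_*$, hence $\langle v, A_\iii w\rangle = 0$ for all $\iii$, contradicting $(2)$. This closes the loop $(1) \Leftrightarrow (2) \Leftrightarrow (3)$.

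Finally, for $(1) \Leftrightarrow (4)$ I observe that a subspace $V$ of $\R^d$ is invariant under every $A_i$ if and only if it is invariant under every product $A_\iii$: one direction is immediate by induction on the length of $\iii$, and the other is trivial since each $A_i = A_\iii$ for the length-one word $\iii = i$. Thus the sets $\{ A_1,\ldots,A_N\}$ and $\{A_\iii : \iii \in \Sigma_*\}$ share exactly the same invariant subspaces, so one is irreducible if and only if the other is. This completes the sketch; the argument is quite formal and I expect no real obstacle beyond being careful that $V_w$ is genuinely nontrivial, which is guaranteed either by including $\varnothing$ or by invertibility of each $A_i$.
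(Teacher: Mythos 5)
Your proof is correct, and it uses essentially the same ingredients as the paper's: the orbit subspace $V_w = \linspan\{A_\iii w : \iii \in \Sigma_*\}$, orthogonality to pass between conditions (2) and (3), and the observation that invariant subspaces for $\{A_1,\ldots,A_N\}$ and for $\{A_\iii : \iii \in \Sigma_*\}$ coincide. The only difference is cosmetic — the paper runs the single cycle $(1)\Rightarrow(2)\Rightarrow(3)\Rightarrow(4)\Rightarrow(1)$ whereas you run $(1)\Rightarrow(3)\Rightarrow(2)\Rightarrow(1)$ and treat $(1)\Leftrightarrow(4)$ separately — so the substance matches.
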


\begin{proof}
  Although the proof is similar to that of \cite[Lemma 2.6]{FalconerSloan2009}, we give it here for the convenience of the reader. To show that (1) $\Rightarrow$ (2), suppose that the condition (2) is not satisfied. Then there exist $0\neq v,w \in \R^d$ such that $\langle v,A_\iii w \rangle = 0$ for all $\iii \in \Sigma_*$. This means that $v$ is orthogonal to the non-trivial proper linear subspace $V = \linspan\{ A_\iii w : \iii \in \Sigma_* \}$ of $\R^d$. Since trivially $A_i(V) \subset V$ for all $i \in \{ 1,\ldots,N \}$ we have shown that (1) does not hold, and thus finished the proof of the implication.

  To show that (2) $\Rightarrow$ (3), suppose to the contrary that there exists $0 \ne w \in \R^d$ such that $V=\linspan(\{ A_\iii w : \iii \in \Sigma_* \})$ is non-trivial proper subspace of $\R^d$. Thus there exists $0 \ne v \in V^\perp$. Since now $\langle v,A_\iii w \rangle = 0$ for all $\iii \in \Sigma_*$ we have shown that (2) does not hold, and thus finished the proof of the implication.

  To show that (3) $\Rightarrow$ (4), assume contrarily that there exists a non-trivial proper linear subspace $V$ of $\R^d$ such that $A_\iii(V) \subset V$ for all $\iii \in \Sigma_*$. Let $0 \ne w \in V$. Since $A_\iii w \subset V$ for all $\iii \in \Sigma_*$ we have shown that $\linspan(\{ A_\iii w : \iii \in \Sigma_* \})$ is a non-trivial proper linear subspace of $\R^d$. Thus (3) does not hold and we have finished the proof of this implication.
  
  Since the implication (4) $\Rightarrow$ (1) is trivial we have finished the whole proof.
\end{proof}

Let $k \in \{ 0,\ldots,d-1 \}$ and $k<s<k+1$. We say that $\mathsf{A}$ is \emph{$s$-irreducible} if for every $v_1,w_1 \in \wedge^k\R^d$ and $v_2,w_2 \in \wedge^{k+1}\R^d$ there is $\iii \in \Sigma_*$ such that
\begin{equation*}
  \langle v_1,A^{\wedge k}_\iii w_1 \rangle_k \ne 0 \quad \text{and} \quad \langle v_2,A^{\wedge(k+1)}_\iii w_2 \rangle_{k+1} \ne 0.
\end{equation*}
Observe that, by Lemma \ref{thm:irr_equiv1}, if $\mathsf{A}$ is $s$-irreducible, then it is $k$-irreducible and $(k+1)$-irreducible. It follows from \cite[Example 9.2 and Lemma 3.3]{KaenmakiMorris2016} and Theorem \ref{thm:unique-eq-new} that the converse is not true. We emphasize that if $\mathsf{A}$ is $s$-irreducible for some $k<s<k+1$, then it is $s$-irreducible for all $k<s<k+1$. Note that $s$-irreducibility for $0<s<1$ is just $1$-irreducibility. The following lemma gives a connection between the $s$-irreducibility and irreducibility.

\begin{lemma} \label{thm:not-s-irr}
  Let $k \in \{ 0,\ldots,d-1 \}$ and $k<s<k+1$. If $\mathsf{A} = (A_1,\ldots,A_N) \in \GL^N$ is not $s$-irreducible, then $\mathsf{A}' = (A_1^{\wedge k} \otimes A_1^{\wedge(k+1)}, \ldots, A_N^{\wedge k} \otimes A_N^{\wedge(k+1)})$ is not irreducible.
\end{lemma}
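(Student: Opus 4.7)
The plan is straightforward: unravel the tensor product structure and then quote Lemma \ref{thm:irr_equiv1} on the tensor-product tuple.

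First I would observe that for any two pairs of linear maps one has $(T_1\otimes U_1)(T_2\otimes U_2)=(T_1T_2)\otimes(U_1U_2)$ on decomposable elements, and hence by bilinearity everywhere. Combined with \eqref{eq:morphism}, this gives that for every $\iii=i_1\cdots i_n\in\Sigma_*$,
\begin{equation*}
  A'_\iii = A'_{i_1}\cdots A'_{i_n} = A_\iii^{\wedge k}\otimes A_\iii^{\wedge(k+1)}.
\end{equation*}
Next I would recall from \S\ref{sec:multilinear} that on decomposable elements of $\wedge^k\R^d\otimes\wedge^{k+1}\R^d$ the inner product factorises as
\begin{equation*}
  \langle v_1\otimes v_2,\;w_1\otimes w_2\rangle = \langle v_1,w_1\rangle_k\langle v_2,w_2\rangle_{k+1}.
\end{equation*}

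Now assume $\mathsf{A}$ is not $s$-irreducible. Then there exist nonzero $v_1,w_1\in\wedge^k\R^d$ and nonzero $v_2,w_2\in\wedge^{k+1}\R^d$ such that for every $\iii\in\Sigma_*$ at least one of $\langle v_1,A_\iii^{\wedge k}w_1\rangle_k$ and $\langle v_2,A_\iii^{\wedge(k+1)}w_2\rangle_{k+1}$ vanishes. Set $v=v_1\otimes v_2$ and $w=w_1\otimes w_2$. Both are nonzero because the tensor product of nonzero vectors in a tensor product of inner product spaces is nonzero. Combining the two displays above yields
\begin{equation*}
  \langle v,A'_\iii w\rangle = \langle v_1,A_\iii^{\wedge k}w_1\rangle_k\,\langle v_2,A_\iii^{\wedge(k+1)}w_2\rangle_{k+1} = 0
\end{equation*}
for every $\iii\in\Sigma_*$. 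Applying the contrapositive of the implication (1)$\Rightarrow$(2) in Lemma \ref{thm:irr_equiv1} (the proof of that lemma uses only linearity and so applies verbatim to invertible linear maps on any finite-dimensional real inner product space, in particular $\wedge^k\R^d\otimes\wedge^{k+1}\R^d$) shows that $\mathsf{A}'$ is not irreducible.

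There is no real obstacle here; the only thing that requires a small care is the implicit convention that the quantifier in the definition of $s$-irreducibility ranges over nonzero vectors (otherwise the notion would be vacuous), which guarantees that the decomposable tensors $v$ and $w$ built above are nonzero and hence eligible witnesses in the tensor-product version of Lemma \ref{thm:irr_equiv1}.
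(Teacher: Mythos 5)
Your proof is correct and uses essentially the same ingredients as the paper's: the morphism property for tensor products of $A^{\wedge k}$ and $A^{\wedge(k+1)}$, the factorization of the inner product on decomposable tensors, and Lemma \ref{thm:irr_equiv1}. The only difference is cosmetic—you argue the implication directly while the paper proves the contrapositive (assume $\mathsf{A}'$ irreducible, deduce $\mathsf{A}$ is $s$-irreducible)—and you are right to flag the implicit nonzero-vector convention in the definition of $s$-irreducibility, which the paper leaves tacit.
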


\begin{proof}
  Write $A_i' = A_i^{\wedge k} \otimes A_i^{\wedge(k+1)}$ for all $i$ and observe that $A_i'$ is an invertible linear map acting on $\wedge^k\R^d \otimes \wedge^{k+1}\R^d$. Assume to the contrary that $\mathsf{A}'$ is irreducible. Then, by Lemma \ref{thm:irr_equiv1}, for every $v,w \in \wedge^k\R^d \otimes \wedge^{k+1}\R^d$ there is $\iii \in \Sigma_*$ such that $\langle v,A_\iii' w \rangle \ne 0$. Since, in particular, this holds for decomposable elements, we see that for every $(v_1,v_2), (w_1,w_2) \in \wedge^k\R^d \times \wedge^{k+1}\R^d$ there is $\iii \in \Sigma_*$ such that
  \begin{equation*}
    \langle v_1, A_\iii^{\wedge k}w_1 \rangle_k \, \langle v_2, A_\iii^{\wedge(k+1)}w_2 \rangle_{k+1} = \langle v_1 \otimes v_2, A_\iii'(w_1 \otimes w_2) \rangle \ne 0.
  \end{equation*}
  Therefore, $\mathsf{A}$ is $s$-irreducible.
\end{proof}

\begin{remark} \label{rem:morris}
  We remark that the above lemma is far from being a characterization. For example, the set $\{ A \otimes A^{\wedge 2} : A \in GL_3(\R) \}$ is not irreducible. To see this, observe that the $1$-dimensional subspace spanned by $v = e_1 \otimes (e_2 \wedge e_3) + e_2 \otimes (e_3 \wedge e_1) + e_3 \otimes (e_1 \wedge e_2)$ is invariant for all $A \otimes A^{\wedge 2}$. Indeed, it is straightforward to see that $(A \otimes A^{\wedge 2})v = \det(A)v$ for all upper and lower triangular $A \in GL_3(\R)$ and hence, by LU decomposition, for all $A \in GL_3(\R)$.
\end{remark}

A slightly modified version of the condition $C(s)$ introduced and used by Falconer and Sloan \cite{FalconerSloan2009} implies $s$-irreducibility. We say that the set $\mathcal{A}$ of matrices in $GL_d(\R)$ satisfies the \emph{condition $C(s)$} if for every $v_1,w_1 \in \wedge^k \R^d$ and $v_2,w_2 \in \wedge^{k+1} \R^d$ there is $A \in \mathcal{A}$ such that
\begin{equation} \label{eq:cs-def}
  \la v_1,A^{\wedge k} w_1 \ra_k \ne 0 \quad \text{and} \quad \la v_2,A^{\wedge(k+1)} w_2 \ra_{k+1} \ne 0.
\end{equation}
A tuple $\mathsf{A} = (A_1\ldots,A_N) \in GL_d(\R)^N$ satisfies the condition $C(s)$ if the corresponding set $\{ A_1\ldots,A_N \}$ satisfies the condition $C(s)$. The condition $C(k)$ holds if the left-hand side equation in \eqref{eq:cs-def} is satisfied. We remark that the condition $C(s)$ in \cite{FalconerSloan2009} is slightly weaker: instead of arbitrary $v_2,w_2 \in \wedge^{k+1} \R^d$ they require \eqref{eq:cs-def} only for vectors of the form $v_2 = v_1 \wedge v$ and $w_2 = w_1 \wedge w$. The following lemma follows immediately from the definitions.

\begin{lemma} \label{thm:CS-cond}
  Let $k \in \{ 0,\ldots,d-1 \}$ and $k \le s < k+1$. Then $\mathsf{A} = (A_1,\ldots,A_N) \in GL_d(\R)^N$ is $s$-irreducible if and only if $\{ A_\iii : \iii \in \Sigma_* \}$ satisfies the condition $C(s)$. In particular, if $\mathsf{A}$ satisfies the condition $C(s)$, then $\mathsf{A}$ is $s$-irreducible.
\end{lemma}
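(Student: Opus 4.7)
The plan is simply to unfold the definitions on both sides of the equivalence and observe that, modulo the morphism identity $(A_\iii)^{\wedge j} = A_{i_1}^{\wedge j}\cdots A_{i_n}^{\wedge j}$ from \eqref{eq:morphism}, they are literally the same quantified statement. I will split the argument into the two ranges $k<s<k+1$ and $s=k$.

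In the non-integer range $k<s<k+1$ I would just compare the two sentences side by side. The definition of $s$-irreducibility asks that for every $v_1,w_1 \in \wedge^k\R^d$ and every $v_2,w_2 \in \wedge^{k+1}\R^d$ some $\iii \in \Sigma_*$ realises both inner-product non-vanishings. Specialising condition $C(s)$ to the family $\mathcal{A}=\{A_\iii : \iii \in \Sigma_*\}$ produces verbatim the same sentence once $A^{\wedge j}$ is read as $A_\iii^{\wedge j}$ using \eqref{eq:morphism} for $j=k$ and $j=k+1$. The biconditional in this range is therefore pure bookkeeping.

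For the integer endpoint $s=k$, by definition $\mathsf{A}$ is $k$-irreducible exactly when the tuple $\mathsf{A}^{\wedge k}=(A_1^{\wedge k},\ldots,A_N^{\wedge k})$ is irreducible. Applying Lemma \ref{thm:irr_equiv1}, part (1)$\Leftrightarrow$(2), to this tuple converts irreducibility into the assertion that for every nonzero $v,w \in \wedge^k\R^d$ there is $\iii \in \Sigma_*$ with $\langle v, (\mathsf{A}^{\wedge k})_\iii w\rangle_k \ne 0$. Rewriting $(\mathsf{A}^{\wedge k})_\iii = A_\iii^{\wedge k}$ by \eqref{eq:morphism} once more puts this in the form of condition $C(k)$ for $\{A_\iii : \iii \in \Sigma_*\}$; the restriction to nonzero vectors imported from Lemma \ref{thm:irr_equiv1} is harmless because a zero argument makes the inner product automatically vanish on either side.

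For the "in particular" clause I would invoke monotonicity of condition $C(s)$ under enlargement of the matrix family: identifying a letter $i \in \{1,\ldots,N\}$ with the length-one word $\iii=i$ gives the inclusion $\{A_1,\ldots,A_N\} \subseteq \{A_\iii : \iii \in \Sigma_*\}$, so any matrix $A_i$ witnessing $C(s)$ for the smaller family witnesses it for the larger one, and the first part of the lemma then yields $s$-irreducibility of $\mathsf{A}$. There is no genuine obstacle in any step; the whole statement is definitional once \eqref{eq:morphism} is in hand, which is why the authors describe it as immediate.
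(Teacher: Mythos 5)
Your proof is correct and matches the paper's approach, which is simply to remark that the lemma ``follows immediately from the definitions.'' Your careful unpacking --- identifying the two quantified clauses via the morphism identity \eqref{eq:morphism}, reducing the integer endpoint $s=k$ to Lemma~\ref{thm:irr_equiv1}, and obtaining the ``in particular'' clause from monotonicity of condition $C(s)$ under enlarging the family --- is exactly what that remark amounts to.
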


\subsection{Singular value function} \label{sec:svf-pressure}

Let $k \in \{ 0,\ldots,d-1 \}$ and $k \le s < k+1$. We define the \emph{singular value function} to be
\begin{equation*}
  \fii^s(A) = \| A^{\wedge k} \|_{k}^{k+1-s} \; \| A^{\wedge(k+1)} \|_{k+1}^{s-k} = \alpha_1(A) \cdots \alpha_{k}(A) \alpha_{k+1}(A)^{s-k}
\end{equation*}
for all $A \in GL_d(\R)$ with the convention that $\|A^{\wedge 0}\|_0 = 1$. Observe that \eqref{eq:morphism} and the submultiplicativity of the operator norm imply
\begin{equation} \label{eq:cylinder2}
\begin{split}
  \fii^s(AB) &= \| (AB)^{\wedge k} \|_{k}^{k+1-s} \; \| (AB)^{\wedge(k+1)} \|_{k+1}^{s-k} \\ &\le \| A^{\wedge k} \|_{k}^{k+1-s} \; \| B^{\wedge k} \|_{k}^{k+1-s} \; \| A^{\wedge(k+1)} \|_{k+1}^{s-k} \; \| B^{\wedge(k+1)} \|_{k+1}^{s-k} = \fii^s(A) \fii^s(B)
\end{split}
\end{equation}
for all $A,B \in GL_d(\R)$. When $s \ge d$, we set $\fii^s(A) = |\det(A)|^{s/d}$ for completeness.

For a given $\mathsf{A} = (A_1,\ldots,A_N) \in GL_d(\R)^N$ we define
\begin{equation} \label{eq:topo_def}
  P_{\mathsf{A}}(\fii^s) = \lim_{n \to \infty} \tfrac{1}{n} \log\sum_{\iii \in \Sigma_n} \fii^s(A_\iii)
\end{equation}
and call it the \emph{singular value pressure} of $\mathsf{A}$. The limit above exists by the standard theory of subadditive sequences. It is easy to see that, as a function of $s$, the singular value pressure is continuous, strictly decreasing, and convex between any two consecutive integers. Furthermore, since $P_{\mathsf{A}}(\fii^0)=\log N > 0$ and $\lim_{s \to \infty} P_{\mathsf{A}}(\fii^s) = -\infty$ there exists unique $s \ge 0$ for which $P_{\mathsf{A}}(\fii^s)=0$. The minimum of $d$ and this $s$ is called the \emph{affinity dimension} of $\mathsf{A}$ and is denoted by $\dimaff(\mathsf{A})$.

\subsection{Equilibrium states} \label{sec:eq-state}
We denote the collection of all Borel probability measures on $\Sigma$ by $\MM(\Sigma)$, and endow it with the weak$^*$ topology. We say that $\mu \in \MM(\Sigma)$ is \emph{fully supported} if $\mu([\iii]) > 0$ for all $\iii \in \Sigma_*$. Let
\[
  \MM_\sigma(\Sigma) = \{ \mu \in \MM(\Sigma) : \mu \text{ is $\sigma$-invariant} \},
\]
where \emph{$\sigma$-invariance} of $\mu$ means that
$\mu([\iii]) = \mu(\sigma^{-1}([\iii])) = \sum_{i=1}^N \mu([i\iii])$
for all $\iii \in \Sigma_*$. Observe that if $\mu \in \MM_\sigma(\Sigma)$, then $\mu(A)=\mu(\sigma^{-1}(A))$ for all Borel sets $A \subset \Sigma$.
We say that $\mu$ is \emph{ergodic} if $\mu(A) = 0$ or $\mu(A) = 1$ for every Borel set $A \subset \Sigma$ with $A = \sigma^{-1}(A)$. Recall that the set $\MM_\sigma(\Sigma)$ is compact and convex with ergodic measures as its extreme points.

If $\mu \in \MM_\sigma(\Sigma)$, then we define the \emph{entropy} $h$ of $\mu$ by setting
\begin{equation*}
  h(\mu) = \lim_{n \to \infty} \tfrac{1}{n} \sum_{\iii \in \Sigma_n} -\mu([\iii]) \log\mu([\iii]).
\end{equation*}
In addition, if $0 \le s \le d$ and $\mathsf{A} = (A_1,\ldots,A_N) \in GL_d(\R)^N$, then we define
\begin{equation*}
  \lambda_{\mathsf{A}}(\fii^s,\mu) = \lim_{n \to \infty} \tfrac{1}{n} \sum_{\iii \in \Sigma_n} \mu([\iii]) \log\fii^s(A_\iii).
\end{equation*}
Recalling \eqref{eq:cylinder2} and the fact that $\mu$ is invariant, the limits above exist and equal the infimums by the standard theory of subadditive sequences.

An application of Jensen's inequality yields
$P_{\mathsf{A}}(\fii^s) \ge h(\mu) + \lambda_{\mathsf{A}}(\fii^s,\mu)$
for all $\mu \in \MM_\sigma(\Sigma)$ and $s \ge 0$. A measure $\mu \in \MM_\sigma(\Sigma)$ is called an \emph{$\varphi^s$-equilibrium state} of $\mathsf{A}$ if it satisfies the following variational principle:
\begin{equation*}
  P_{\mathsf{A}}(\fii^s) = h(\mu) + \lambda_{\mathsf{A}}(\fii^s,\mu).
\end{equation*}
K\"{a}enm\"{a}ki \cite[Theorems 2.6 and 4.1]{Kaenmaki2004} proved that for each $\mathsf{A} \in GL_d(\R)^N$ and $s \ge 0$ there exists an ergodic $\fii^s$-equilibrium state of $\mathsf{A}$; see also \cite[Theorem 3.3]{KaenmakiVilppolainen2010}. The example of K\"aenm\"aki and Vilppolainen \cite[Example 6.2]{KaenmakiVilppolainen2010} shows that such an equilibrium state is not necessarily unique.

The following theorem shows that $s$-irreducibility is a sufficient condition for the uniqueness of the $\fii^s$-equilibrium state. Its proof follows by applying \cite[Lemma 2.5]{KaenmakiMorris2016} in \cite[Theorem A]{KaenmakiReeve2014}.

\begin{theorem} \label{thm:unique-eq-new}
  Let $k \in \{ 0,\ldots,d-1 \}$ and $k < t < k+1$. If $\mathsf{A} = (A_1,\ldots,A_N) \in GL_d(\R)^N$ is $t$-irreducible, then for every $k<s<k+1$ there exists a unique $\fii^s$-equilibrium state $\mu$ of $\mathsf{A}$ and it satisfies the following condition: there exists $C \ge 1$ depending only on $\mathsf{A}$ and $s$ such that
  \begin{equation*}
    C^{-1}e^{-nP_{\mathsf{A}}(\fii^s)}\fii^s(A_\iii) \le \mu([\iii]) \le Ce^{-nP_{\mathsf{A}}(\fii^s)}\fii^s(A_\iii)
  \end{equation*}
  for all $\iii \in \Sigma_*$.
\end{theorem}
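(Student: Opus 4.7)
The plan is to deduce the theorem from the general thermodynamic framework developed in \cite{KaenmakiReeve2014}: Theorem~A therein establishes that whenever the sub-multiplicative potential $\fii^s$ is \emph{quasi-multiplicative} --- meaning there exist $c > 0$ and a finite set $F \subset \Sigma_*$ such that for every $\iii, \jjj \in \Sigma_*$ some $\hhh \in F$ satisfies
\[
  \fii^s(A_{\iii\hhh\jjj}) \ge c\, \fii^s(A_\iii) \fii^s(A_\jjj),
\]
the $\fii^s$-equilibrium state is unique and satisfies the two-sided Gibbs bound claimed in the statement. Since $t$-irreducibility does not depend on the specific value of $t$ inside $(k,k+1)$, a single such hypothesis will serve all $s \in (k,k+1)$, and the task reduces to deriving quasi-multiplicativity of $\fii^s$ from $t$-irreducibility.

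The main step is to upgrade the existential content of $t$-irreducibility to a uniform version; this is essentially the role played by \cite[Lemma 2.5]{KaenmakiMorris2016}. Explicitly, I claim there exist $c_0 > 0$ and a finite set $F \subset \Sigma_*$ such that for all unit $v_1, w_1 \in \wedge^k \R^d$ and $v_2, w_2 \in \wedge^{k+1} \R^d$,
\[
  \max_{\hhh \in F}\, |\la v_1, A_\hhh^{\wedge k} w_1 \ra_k|^{k+1-s} \cdot |\la v_2, A_\hhh^{\wedge(k+1)} w_2 \ra_{k+1}|^{s-k} \ge c_0.
\]
The proof is a standard compactness upgrade: for each quadruple $(v_1,w_1,v_2,w_2)$ of unit vectors, $t$-irreducibility supplies some $\hhh \in \Sigma_*$ making both inner products nonzero; continuity of the maximand on the compact product of unit spheres combined with a finite subcover yields the uniform constant $c_0$ and finite set $F$.

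Given arbitrary $\iii, \jjj \in \Sigma_*$, I would choose $w_1$ and $v_1$ to be unit top singular directions of $A_\jjj^{\wedge k}$ and $A_\iii^{\wedge k}$, respectively, and analogously $w_2, v_2$ at wedge level $k+1$. Standard singular value manipulations give
\[
  \|A_{\iii\hhh\jjj}^{\wedge k}\|_k \ge |\la v_1, A_\hhh^{\wedge k} w_1 \ra_k| \cdot \|A_\iii^{\wedge k}\|_k \, \|A_\jjj^{\wedge k}\|_k
\]
and the analogue at level $k+1$. Raising to the powers $k+1-s$ and $s-k$ and multiplying, the uniform bound above yields some $\hhh \in F$ with $\fii^s(A_{\iii\hhh\jjj}) \ge c_0 \fii^s(A_\iii)\fii^s(A_\jjj)$, which is the required quasi-multiplicativity. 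Plugging this into \cite[Theorem~A]{KaenmakiReeve2014} concludes the proof.

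The main obstacle is that a single connector $\hhh$ must control both wedge levels simultaneously: applying $k$-irreducibility and $(k+1)$-irreducibility separately would only supply two a priori different connectors, which is insufficient for the mixed potential $\fii^s$. The notion of $t$-irreducibility is calibrated precisely to this coupled control, and Lemma \ref{thm:not-s-irr} together with the compactness upgrade from \cite[Lemma 2.5]{KaenmakiMorris2016} is exactly what enables the simultaneous extraction. Note also that the upper Gibbs bound is essentially automatic from sub-multiplicativity \eqref{eq:cylinder2} and the definition of $P_{\mathsf{A}}(\fii^s)$; the real content of the theorem lies in the matching lower bound, which is precisely what quasi-multiplicativity delivers.
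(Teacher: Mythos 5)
Your proposal is correct and follows the same route the paper takes: the paper's proof is precisely the one-line citation of \cite[Lemma 2.5]{KaenmakiMorris2016} (the uniform, finite-connector upgrade of $s$-irreducibility that yields quasi-multiplicativity of $\fii^s$) fed into \cite[Theorem A]{KaenmakiReeve2014} (quasi-multiplicativity implies a unique equilibrium state with the two-sided Gibbs bound), and your reconstruction of the compactness argument and the singular-direction estimate faithfully reproduces the content of that lemma. The only small slip is the closing mention of Lemma~\ref{thm:not-s-irr}, which concerns the tensor-product reformulation and plays no role in this proof.
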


Similarly as in \eqref{eq:topo_def}, given $\mathsf{A} = (A_1,\ldots,A_N) \in GL_d(\R)^N$ and $s \ge 0$, we define
\begin{equation*}
  P_{\mathsf{A}}(\|\cdot\|^s) = \lim_{n \to \infty} \tfrac{1}{n} \log\sum_{\iii \in \Sigma_n} \| A_\iii \|^s
\end{equation*}
and call it the \emph{norm pressure} of $\mathsf{A}$. Note that $P_{\mathsf{A}}(\|\cdot\|^s) = P_{\mathsf{A}}(\fii^s)$ for all $0 \le s \le 1$. If $\mu \in \MM_\sigma(\Sigma)$, then we also set
\begin{equation*}
  \lambda_{\mathsf{A}}(\|\cdot\|^s,\mu) = \lim_{n \to \infty} \tfrac{1}{n} \sum_{\iii \in \Sigma_n} \mu([\iii]) \log\|A_\iii\|^s.
\end{equation*}
It follows that $P_{\mathsf{A}}(\|\cdot\|^s) \ge h(\mu) + \lambda_{\mathsf{A}}(\|\cdot\|^s,\mu)$ for all $\mu \in \MM_\sigma(\Sigma)$ and $s \ge 0$. A measure $\mu \in \MM_\sigma(\Sigma)$ is called a \emph{$\|\cdot\|^s$-equilibrium state} of $\mathsf{A}$ if
\begin{equation*}
  P_{\mathsf{A}}(\|\cdot\|^s) = h(\mu) + \lambda_{\mathsf{A}}(\|\cdot\|^s,\mu).
\end{equation*}
The following theorem is proved by Feng and K\"aenm\"aki \cite[Theorem 1.7]{FengKaenmaki2011}.

\begin{theorem} \label{thm:FeKa}
  If $0 \le s \le d$ and $\mathsf{A} \in GL_d(\R)^N$, then there exist at most $d$ distinct ergodic $\|\cdot\|^s$-equilibrium states of $\mathsf{A}$ and they are all fully supported. Furthermore, if $\mathsf{A}$ is irreducible, then the equilibrium state is unique.
\end{theorem}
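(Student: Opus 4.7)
The plan is to combine Kingman's subadditive ergodic theorem, a forbidden-word pressure estimate, and an inductive block decomposition exploiting invariant subspaces of the matrix tuple.

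First, for any ergodic $\mu \in \MM_\sigma(\Sigma)$, Kingman's subadditive ergodic theorem applied to the submultiplicative sequence $n \mapsto \log\|A_{\iii|_n}\|$ gives $\lambda_{\mathsf{A}}(\|\cdot\|^s,\mu) = s\chi_1(\mathsf{A},\mu)$, where $\chi_1(\mathsf{A},\mu) = \lim_n \tfrac{1}{n}\int\log\|A_{\iii|_n}\|\,d\mu(\iii)$ is the top Lyapunov exponent of the cocycle driven by $\mu$. Existence of ergodic equilibrium states then follows the scheme of \cite{Kaenmaki2004}: form the Gibbs-type measures $\nu_n = Z_n^{-1}\sum_{\iii \in \Sigma_n}\|A_\iii\|^s\mu_\iii$ with $\mu_\iii$ a Bernoulli extension supported on $[\iii]$, Cesàro-average under $\sigma$, and pass to weak-$*$ limits; submultiplicativity of $\|\cdot\|^s$ and upper semicontinuity of $\mu \mapsto h(\mu)+\lambda_{\mathsf{A}}(\|\cdot\|^s,\mu)$ imply that every limit attains the supremum in the variational principle, and its ergodic components remain equilibrium states.

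Full support is established by contradiction. If an ergodic equilibrium state $\mu$ satisfied $\mu([\jjj])=0$ for some $\jjj \in \Sigma_*$, then $\mu$ would be supported on the proper subshift $\Sigma^{\jjj} \subset \Sigma$ of sequences omitting $\jjj$ as a factor. Since each $A_i$ lies in $GL_d(\R)$, a concatenation argument based on submultiplicativity of $\|\cdot\|^s$ produces the strict inequality $P_{\mathsf{A}|_{\Sigma^{\jjj}}}(\|\cdot\|^s) < P_{\mathsf{A}}(\|\cdot\|^s)$, and the variational principle restricted to $\Sigma^{\jjj}$ then contradicts the equilibrium property of $\mu$.

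For the cardinality bound and uniqueness under irreducibility, I would induct on $d$. The base case $d=1$ is immediate, since the norm is multiplicative and the unique equilibrium state is Bernoulli with weights $|A_i|^s/\sum_j|A_j|^s$. For $d\ge 2$ and reducible $\mathsf{A}$, choose a minimal invariant subspace $V \subset \R^d$ of dimension $d_1$, $1\le d_1<d$, and decompose each $A_i$ in block upper triangular form with respect to $V$ and a complement. The chain rule for Lyapunov exponents gives $\chi_1(\mathsf{A},\mu)=\max\{\chi_1(\mathsf{A}|_V,\mu),\chi_1(\mathsf{A}|_{\R^d/V},\mu)\}$ for every ergodic $\mu$, so each ergodic equilibrium state of $\mathsf{A}$ is also a $\|\cdot\|^s$-equilibrium state for at least one of the two lower-dimensional cocycles; the inductive hypothesis then bounds the total by $d_1+(d-d_1)=d$. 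If $\mathsf{A}$ is irreducible, then the top Oseledets direction associated to any ergodic equilibrium state $\mu$ pushes forward to a $\sigma$-invariant measure on $\Sigma\times\R\mathbb{P}^{d-1}$ whose fiber disintegration is a Furstenberg-type stationary measure for the projective action of $\mathsf{A}$; irreducibility forces uniqueness of this stationary measure, which combined with the equilibrium identity $h(\mu)+s\chi_1(\mathsf{A},\mu)=P_{\mathsf{A}}(\|\cdot\|^s)$ pins down $\mu$. The main obstacle is precisely this last step: recovering $\mu$ on $\Sigma$ from the uniqueness of the projective stationary measure requires a careful skew-product/disintegration argument tying the projective dynamics back to the base dynamics on $\Sigma$, and is the only place where irreducibility (as opposed to the weaker reducible-induction structure) is genuinely used.
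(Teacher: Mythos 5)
The paper does not prove this theorem at all; it is quoted verbatim from Feng and K\"aenm\"aki \cite[Theorem 1.7]{FengKaenmaki2011} and cited as such, so there is no proof in the paper to compare against. Your proposal does, however, have the same high-level skeleton as the Feng--K\"aenm\"aki argument: induction on the dimension through an invariant subspace, block upper-triangularization, and the observation that the pressure of a block-triangular cocycle is the maximum of the pressures of the diagonal blocks.

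Two steps in your write-up are genuine gaps. First, your full-support argument is circular. You claim that if an ergodic equilibrium state $\mu$ gives zero mass to some cylinder $[\jjj]$, then ``a concatenation argument based on submultiplicativity'' yields the strict inequality $P_{\mathsf{A}|_{\Sigma^{\jjj}}}(\|\cdot\|^s) < P_{\mathsf{A}}(\|\cdot\|^s)$. Submultiplicativity alone gives only $P_{\mathsf{A}|_{\Sigma^{\jjj}}}(\|\cdot\|^s) \le P_{\mathsf{A}}(\|\cdot\|^s)$; strictness in the subadditive setting is essentially equivalent to the full-support property you are trying to establish (this is exactly why the paper's Proposition~\ref{thm:pressure-drop} \emph{assumes} full support as a hypothesis). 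For the additive Ruelle--Bowen theory strictness follows from the Gibbs inequality, and the analogue here needs quasi-multiplicativity, which is not available for a general reducible $\mathsf{A}$. The correct route (and the one used in \cite{FengKaenmaki2011}) is to first decompose into irreducible diagonal blocks, prove full support for each block separately, and then transfer it back.

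Second, the uniqueness step under irreducibility via a Furstenberg stationary measure is a genuinely different approach from \cite{FengKaenmaki2011}, and you yourself flag the missing link: uniqueness of the stationary measure on $\R\mathbb{P}^{d-1}$ does not, on its own, determine the base measure $\mu$ on $\Sigma$; the disintegration argument you gesture at is precisely what would need to be supplied, and it is non-trivial (stationary measures are attached to a fixed driving measure, not varying over $\MM_\sigma(\Sigma)$). What Feng--K\"aenm\"aki actually prove is a quasi-multiplicativity statement: irreducibility of $\mathsf{A}$ implies the existence of $c>0$ and $m\in\N$ such that for all $\iii,\jjj\in\Sigma_*$ there is $\kkk$ with $|\kkk|\le m$ and $\|A_{\iii\kkk\jjj}\|\ge c\,\|A_\iii\|\,\|A_\jjj\|$. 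This promotes the subadditive potential to a quasi-multiplicative one and yields a Gibbs-type estimate $\mu([\iii]) \asymp e^{-nP}\|A_\iii\|^s$, from which uniqueness, ergodicity, and full support all follow simultaneously. Replacing your projective-dynamics step with this quasi-multiplicativity lemma would close both gaps.
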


As became apparent in \cite{KaenmakiMorris2016}, this result is useful also in the study of $\fii^s$-equilibrium states.

\section{Proofs of the main results}

To prove Theorems \ref{thm:schmeling-rational} and \ref{thm:schmeling-generic}, we rely on the following proposition. It is proved in \cite[Proposition 8.1]{KaenmakiMorris2016} and its proof is a simple consequence of the variational principle.

\begin{proposition} \label{thm:pressure-drop}
  Let $0 \le s \le d$ and $\mathsf{A} = (A_1,\ldots,A_N) \in GL_d(\R)^N$. If all the $\fii^s$-equilibrium states of $\mathsf{A}$ are fully supported and $\mathsf{A}' = (A_1,\ldots,A_{N-1}) \in GL_d(\R)^{N-1}$, then $P_{\mathsf{A}'}(\fii^s) < P_{\mathsf{A}}(\fii^s)$. Moreover, if $P_{\mathsf{A}}(\fii^d) \le 0$, then $\dimaff(\mathsf{A}') < \dimaff(\mathsf{A})$.
\end{proposition}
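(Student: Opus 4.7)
The plan is to exploit the variational principle for the singular value pressure together with the canonical inclusion $\Sigma' := \{1,\ldots,N-1\}^{\N} \hookrightarrow \Sigma$. The easy half of the first conclusion, $P_{\mathsf{A}'}(\fii^s) \le P_{\mathsf{A}}(\fii^s)$, is immediate from \eqref{eq:topo_def} since the sum defining the left-hand side is a subsum of the one on the right.

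To upgrade this to a strict inequality, I would argue by contradiction and suppose $P_{\mathsf{A}'}(\fii^s) = P_{\mathsf{A}}(\fii^s)$. By the existence result of K\"aenm\"aki recalled in \S\ref{sec:eq-state}, applied now to the truncated tuple $\mathsf{A}'$, there is an ergodic $\fii^s$-equilibrium state $\nu$ of $\mathsf{A}'$ on $\Sigma'$, satisfying $P_{\mathsf{A}'}(\fii^s) = h(\nu) + \lambda_{\mathsf{A}'}(\fii^s,\nu)$. I would then view $\nu$ as a Borel probability measure on $\Sigma$ supported on $\Sigma'$. Since the cylinder $[N]$ then carries zero $\nu$-mass, the invariance identity $\nu([\iii]) = \sum_{i=1}^{N-1}\nu([i\iii]) = \sum_{i=1}^{N}\nu([i\iii])$ holds for every $\iii \in \Sigma_*$, so $\nu \in \MM_\sigma(\Sigma)$. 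The same observation, combined with the convention $0\log 0 = 0$, shows term by term that $h(\nu)$ and $\lambda_{\mathsf{A}}(\fii^s,\nu)$ equal their $\mathsf{A}'$-counterparts, because every summand indexed by a word containing the letter $N$ vanishes. Consequently, $\nu$ realises the variational principle for $\mathsf{A}$ and is thus a $\fii^s$-equilibrium state of $\mathsf{A}$ which is not fully supported, contradicting the hypothesis.

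For the moreover part, I would invoke the strict monotonicity of $s \mapsto P_{\mathsf{A}}(\fii^s)$ recorded in \S\ref{sec:svf-pressure}: the assumption $P_{\mathsf{A}}(\fii^d) \le 0$ forces the unique zero $s_*$ of this map to lie in $[0,d]$ and equal $\dimaff(\mathsf{A})$. Applying the first conclusion at $s = s_*$ gives $P_{\mathsf{A}'}(\fii^{s_*}) < P_{\mathsf{A}}(\fii^{s_*}) = 0$, and the corresponding strict monotonicity for $\mathsf{A}'$ then yields $\dimaff(\mathsf{A}') < s_* = \dimaff(\mathsf{A})$.

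The only point requiring care is the bookkeeping verification that pushing $\nu$ from $\Sigma'$ to $\Sigma$ preserves both the entropy and the subadditive exponent appearing in the variational principle; this is the crux of the argument, although it is routine once one writes out the defining sums and observes that the additional summands are identically zero. Everything else reduces to the variational principle and strict monotonicity of the pressure.
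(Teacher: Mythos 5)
Your argument is correct, and it is precisely the ``simple consequence of the variational principle'' alluded to in the paper; the paper itself does not reproduce the proof but instead cites \cite[Proposition~8.1]{KaenmakiMorris2016}. The key steps — taking an ergodic equilibrium state $\nu$ of $\mathsf{A}'$ (which exists by \cite{Kaenmaki2004}), viewing it as a $\sigma$-invariant measure on $\Sigma$ charging no cylinder containing the symbol $N$, observing that entropy and the subadditive exponent are unchanged under this identification, and deducing via the variational inequality $P_{\mathsf{A}}(\fii^s) \ge h(\nu) + \lambda_{\mathsf{A}}(\fii^s,\nu) = P_{\mathsf{A}'}(\fii^s)$ with equality only if $\nu$ were a non--fully-supported equilibrium state of $\mathsf{A}$ — are exactly the intended mechanism. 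The deduction of the ``moreover'' part from the strict monotonicity of $s \mapsto P_{\mathsf{A}}(\fii^s)$ together with $P_{\mathsf{A}}(\fii^d) \le 0$ is likewise the expected finish.
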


Let us first focus on Theorem \ref{thm:schmeling-rational}.

\begin{theorem} \label{thm:rational-eq}
  If $0 \le s \le d$ is rational and $\mathsf{A} \in GL_d(\R)^N$, then there exist at most finitely many distinct ergodic $\fii^s$-equilibrium states of $\mathsf{A}$ and they are all fully supported.
\end{theorem}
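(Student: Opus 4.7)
The strategy is to exploit the rationality of $s$ in order to convert $\fii^s$ into a genuine operator norm by taking suitable tensor powers, and then appeal to Theorem \ref{thm:FeKa} on $\|\cdot\|^t$-equilibrium states. The key observation is that $\fii^s$ involves fractional exponents of operator norms on exterior powers, but if $s$ is rational one can clear these exponents by passing to a finite tensor product.

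Writing $s = k + p/q$ with $k = \lfloor s \rfloor$ and integers $p,q$ satisfying $0 \le p < q$ (treating $s = d$ as the analogous boundary case), I would define an auxiliary tuple
$$
  \mathsf{B} = (B_1,\ldots,B_N), \qquad B_i = \bigl(A_i^{\wedge k}\bigr)^{\otimes(q-p)} \otimes \bigl(A_i^{\wedge(k+1)}\bigr)^{\otimes p},
$$
whose entries are invertible linear maps on a space of dimension $D = \binom{d}{k}^{q-p}\binom{d}{k+1}^{p}$. Combining \eqref{eq:morphism} with the multiplicativity $(T\otimes U)(T'\otimes U') = (TT')\otimes(UU')$ of tensor products shows that $B_{i_1\cdots i_n} = B_{i_1}\cdots B_{i_n}$, and then \eqref{eq:tensor_norm_product} together with \eqref{eq:operator_norm_for_wedge} yields
$$
  \|B_\iii\| \;=\; \|A_\iii^{\wedge k}\|_k^{\,q-p}\,\|A_\iii^{\wedge(k+1)}\|_{k+1}^{\,p} \;=\; \fii^s(A_\iii)^q
$$
for every $\iii \in \Sigma_*$.

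This identity forces $P_{\mathsf{B}}(\|\cdot\|^{1/q}) = P_{\mathsf{A}}(\fii^s)$ and $\lambda_{\mathsf{B}}(\|\cdot\|^{1/q},\mu) = \lambda_{\mathsf{A}}(\fii^s,\mu)$ for every $\mu \in \MM_\sigma(\Sigma)$, so the variational principle for $\mathsf{B}$ with exponent $1/q$ is exactly the one for $\mathsf{A}$ and $\fii^s$. In particular the two sets of ergodic equilibrium states coincide as subsets of $\MM_\sigma(\Sigma)$. Since $1/q \le 1 \le D$, Theorem \ref{thm:FeKa} applied to $\mathsf{B}$ with exponent $1/q$ yields at most $D$ distinct ergodic $\|\cdot\|^{1/q}$-equilibrium states of $\mathsf{B}$, all of them fully supported, which transfers directly to the desired statement for $\mathsf{A}$.

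The only nontrivial step is the identity $\|B_\iii\| = \fii^s(A_\iii)^q$, and this is precisely where rationality is crucial: no finite tensor-power construction turns $\fii^s$ into an honest operator norm when $s$ is irrational, which is why the argument does not extend past rationals. The remaining checks (boundary cases $s \in \{0,1,\ldots,d\}$, where $p = 0$ and one of the tensor factors collapses, and the verification that the $\|\cdot\|^{1/q}$-equilibrium states of $\mathsf{B}$ really do correspond bijectively to the $\fii^s$-equilibrium states of $\mathsf{A}$) are routine.
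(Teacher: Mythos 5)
Your proposal is correct and follows essentially the same route as the paper: express $\fii^s(A)^q$ as the operator norm of the tensor power $(A^{\wedge k})^{\otimes(q-p)} \otimes (A^{\wedge(k+1)})^{\otimes p}$, identify the $\fii^s$-equilibrium states of $\mathsf{A}$ with the $\|\cdot\|^{1/q}$-equilibrium states of the auxiliary tuple, and then invoke Theorem \ref{thm:FeKa}. Your parametrization $s = k + p/q$ with $0 \le p < q$ is just a reindexing of the paper's $s = p/q$ with $k < p/q < k+1$ (the exponents $q-p$ and $p$ correspond to the paper's $(k+1)q-p$ and $p-kq$), so the construction and conclusion are identical.
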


\begin{proof}
  Following \cite[\S 5]{Morris2015}, we will express the singular value function as a norm of a tensor product. The assumption that $s$ is rational is essential here. Note that if $s$ is an integer, then the claim follows immediately from \cite[\S 3]{FengKaenmaki2011}. Let $p,q \in \N$ be such that $s=p/q$ and let $k \in \{ 0,\ldots,d-1 \}$ be such that $k<p/q<k+1$.
  If $A \in \GL$, then
  \begin{equation} \label{eq:rational-matrix}
    A' = (\otimes_{i=1}^{(k+1)q-p} A^{\wedge k}) \otimes (\otimes_{j=1}^{p-k q} A^{\wedge(k+1)})
  \end{equation}
  is an invertible linear map acting on the $\binom{d}{k}^{(k+1)q-p} \binom{d}{k+1}^{p-k q}$-dimensional inner product space $(\otimes_{i=1}^{(k+1)q-p} \wedge^k\R^d) \otimes (\otimes_{j=1}^{p-k q} \wedge^{k+1}\R^d)$. Observe that \eqref{eq:tensor_norm_product} and \eqref{eq:operator_norm_for_wedge} give
  \begin{align*}
    \| A' \| &= \| A^{\wedge k} \|_k^{(k+1)q-p} \| A^{\wedge(k+1)} \|_{k+1}^{p-k q} = (\alpha_1(A) \cdots \alpha_k(A))^{(k+1)q-p} (\alpha_1(A) \cdots \alpha_{k+1}(A))^{p-k q} \\
    &= (\alpha_1(A) \cdots \alpha_k(A))^q \alpha_{k+1}(A)^{p-\ell q} = \fii^s(A)^q
  \end{align*}
  and hence
  \begin{equation*}
    \fii^{s}(A) = \| A' \|^{1/q}.
  \end{equation*}
  The set of $\fii^s$-equilibrium states of $\mathsf{A}$ is therefore precisely the set of $\| \cdot \|^{1/q}$-equilibrium states of $\mathsf{A}'$, where $\mathsf{A}' = (A_1',\ldots,A_N') \in GL_{d'}(\R)^N$, $d' = \binom{d}{k}^{(k+1)q-p} \binom{d}{k+1}^{p-k q}$, and each $A_i'$ is of the form \eqref{eq:rational-matrix}. Thus, by Theorem \ref{thm:FeKa}, there exist at most $d'$ distinct ergodic $\fii^s$-equilibrium states of $\mathsf{A}$ and they are all fully supported.
\end{proof}

\begin{remark}
  (1) Unfortunately the proof of the previous theorem does not show that the number of $\fii^s$-equilibrium states is bounded over $s$.

  (2) A small modification of Lemma \ref{thm:not-s-irr} shows that if $\mathsf{A} = (A_1,\ldots,A_N) \in GL_d(\R)^N$ is not $s$-irreducible, then $\mathsf{A}' = (A_1',\ldots,A_N')$, where each $A_i'$ is of the form \eqref{eq:rational-matrix}, is not irreducible.
\end{remark}

\begin{proof}[Proof of Theorem \ref{thm:schmeling-rational}]
  If $\mathsf{A}' = (A_1,\ldots,A_{N-1}) \in GL_d(\R)^{N-1}$, then Theorem \ref{thm:rational-eq} and Proposition \ref{thm:pressure-drop} imply that $\dimaff(\mathsf{A}') < \dimaff(\mathsf{A})$. Therefore, the result of Falconer \cite[Theorem 5.3]{Falconer1988} finishes the proof.
\end{proof}

Let us then turn to Theorem \ref{thm:schmeling-generic}. We will first give a sufficient and checkable condition for $s$-irreducibility. We say that $(A_1,A_2) \in GL_d(\R)^2$ satisfies the \emph{eigenvalue condition} if both matrices have $d$ distinct eigenvalues (real or complex) and the following two conditions are satisfied:
\begin{labeledlist}{E}
  \item The eigenvalues of $A_1$ and $A_2$, respectively denoted by $\lambda_1,\ldots,\lambda_d$ and $\lambda_1',\ldots,\lambda_d'$, satisfy
  \begin{equation*}
    \lambda_{i_1} \cdots \lambda_{i_k} \ne \lambda_{j_1} \cdots \lambda_{j_k} \quad \text{and} \quad \lambda_{i_1}' \cdots \lambda_{i_k}' \ne \lambda_{j_1}' \cdots \lambda_{j_k}'
  \end{equation*}
  for all pairs $(i_1,\ldots,i_k) \ne (j_1,\ldots,j_k)$ and for all $k \in \{ 1,\ldots,d \}$. \label{E1}
  \item If the eigenvectors of $A_1$ and $A_2$ corresponding to the eigenvalues are $e_1,\ldots,e_d$ and $e_1',\ldots,e_d'$, respectively, and $X \in GL_d(\R)$ is the change of basis matrix for which $Xe_i'=e_i$ for all $i \in \{ 1,\ldots,d \}$, then all the minors of $X$ are non-zero. \label{E2}
\end{labeledlist}
In \ref{E2}, if an eigenvalue $\lambda$ is complex, then its complex conjugate $\overline{\lambda}$ is also an eigenvalue. In this case, we choose eigenvectors to be any two linearly independent vectors spanning the invariant plane corresponding to $\lambda$ and $\overline{\lambda}$.

Furthermore, we say that $\mathsf{A} = (A_1,\ldots,A_N) \in GL_d(\R)^N$ satisfies the eigenvalue condition if there exist $i,j \in \{ 1,\ldots,N \}$ such that $i \ne j$ and $(A_i,A_j)$ satisfies the eigenvalue condition. The following proposition shows that the eigenvalue condition is generic.

\begin{proposition} \label{thm:eigenvalue-cond}
  The set $\{ \mathsf{A} \in GL_d(\R)^N : \mathsf{A} \text{ satisfies the eigenvalue condition} \}$ is open, dense, and of full Lebesgue measure in $GL_d(\R)^N$. In fact, the complement of the set is a finite union of $(d^2N-1)$-dimensional algebraic varieties.
\end{proposition}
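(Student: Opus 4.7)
My plan is to reduce to the case $N=2$ and to express each ingredient of the eigenvalue condition as the non-vanishing of a polynomial in the entries of $A_1$ and $A_2$. An explicit pair $(A_1,A_2)$ simultaneously satisfying all these inequalities then shows that the polynomials in question are not identically zero, from which openness, density, full measure, and the algebraic structure of the exceptional set all follow.

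For the reduction, fix the pair of indices $(1,2)$ and let $S \subset GL_d(\R)^2$ be the set of pairs $(A_1,A_2)$ satisfying the eigenvalue condition. The set in the proposition contains $S \times GL_d(\R)^{N-2}$, so its complement is contained in $S^c \times GL_d(\R)^{N-2}$. It therefore suffices to prove that $S^c$ is a finite union of $(2d^2-1)$-dimensional algebraic subvarieties of $GL_d(\R)^2 \cong \R^{2d^2}$; the Cartesian product with $GL_d(\R)^{N-2}$ then delivers a finite union of $(d^2N-1)$-dimensional algebraic subvarieties of $GL_d(\R)^N$, as required.

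The distinct-eigenvalues requirement together with condition \ref{E1} is exactly that $A_i^{\wedge k}$ has $\binom{d}{k}$ distinct eigenvalues for every $k \in \{1,\ldots,d\}$ and every $i \in \{1,2\}$, since the eigenvalues of $A^{\wedge k}$ are the products $\lambda_{j_1}\cdots\lambda_{j_k}$. As the entries of $A^{\wedge k}$ are polynomials in the entries of $A$, this is the non-vanishing of the discriminants of the characteristic polynomials $\chi_{A_i^{\wedge k}}$, each a polynomial in the entries of $A_i$. The subtle part is \ref{E2}, because eigenvectors are not polynomial in the matrix entries. However, on the Zariski-open locus where \ref{E1} holds, each eigenvector of $A$ attached to a simple eigenvalue $\lambda$ is proportional to a nonzero column of $\mathrm{adj}(A - \lambda I)$, which is polynomial in the entries of $A$ and in $\lambda$, while $\lambda$ itself is algebraic over the rational function field in the entries of $A$ (being a root of $\chi_A$). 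Eliminating the eigenvalue variables by iterated resultants against $\chi_{A_1}$ and $\chi_{A_2}$ and clearing denominators, each minor $m$ of the change-of-basis matrix $X$ satisfies a polynomial relation $a_r(A_1,A_2)\,m^r + \cdots + a_0(A_1,A_2) = 0$ with polynomial coefficients. In particular $\{m = 0\} \subset \{a_0 = 0\}$, an algebraic subvariety of $GL_d(\R)^2$ which is proper provided $a_0$ is not identically zero. Complex eigenvalues require only a minor modification: invariant planes are spanned by the real and imaginary parts of complex eigenvectors, which remain algebraic functions of the entries.

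The main obstacle is to confirm that none of the finitely many polynomial obstructions $a_0$ above is identically zero. I would resolve this via a single explicit construction. Take $A_1 = \mathrm{diag}(\lambda_1,\ldots,\lambda_d)$ with $\lambda_i$ chosen so that all $2^d$ subset products are distinct (for instance, distinct primes), and set $A_2 = V\,\mathrm{diag}(\mu_1,\ldots,\mu_d)\,V^{-1}$, where $V_{ij} = \alpha_i^{j-1}$ is a Vandermonde matrix in distinct positive reals $\alpha_i$ and the $\mu_i$ are chosen analogously. Every minor of $V$ is a generalized Vandermonde determinant in positive entries, hence nonzero; by Jacobi's identity the same holds for $V^{-1}$. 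Since $A_1$ has eigenvectors $e_1,\ldots,e_d$, the change-of-basis matrix is $X = V^{-1}$, and all its minors are nonzero. The pair $(A_1,A_2)$ thus lies in $S$ and simultaneously satisfies all the required inequalities, so every polynomial obstruction is genuinely nontrivial. The rest is routine: a proper real algebraic subvariety of $\R^{2d^2}$ is nowhere dense and has Lebesgue measure zero, delivering openness, density, and full measure, while the finite union structure delivers the algebraic claim.
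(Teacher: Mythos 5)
Your proposal is correct and follows the same strategy as the paper: reduce to $N=2$, encode condition \ref{E1} as the non-vanishing of a discriminant that is polynomial in the matrix entries, and encode condition \ref{E2} by eliminating the eigenvalue/eigenvector variables so that failure of \ref{E2} is again cut out by a polynomial in the entries of $A_1,A_2$. The differences are modest. For \ref{E1} you work with the discriminant of the characteristic polynomial of $A^{\wedge k}$ directly, while the paper introduces the auxiliary polynomial $g(\lambda)=\prod(\lambda-\lambda_{i_1}\cdots\lambda_{i_k})$; these are the same object in different clothing. For \ref{E2} you correctly flag that the eigenvectors are algebraic (not linear) in the matrix entries and propose a resultant elimination via adjugate columns, which is more explicit than the paper's terse remark about ``linear equations'' and ``polynomial equations''; both arguments leave the same small technical points unstated (normalization of eigenvectors, choice of a nonzero adjugate column, dependence of the minors of $X$ on the ordering of the eigenvalues, handling of complex-conjugate pairs). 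Your genuine addition is the explicit diagonal-plus-Vandermonde pair $(A_1,A_2)$, with the generalized Vandermonde / Jacobi identity argument, showing that the obstruction polynomials are not identically zero. This is what one actually needs to conclude that the exceptional algebraic sets are \emph{proper}, hence $(d^2N-1)$-dimensional; the paper leaves that point implicit, so your construction is a small but real improvement in rigor.
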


\begin{proof}
  The claim basically follows from the first part of the proof of \cite[Corollary 2.7]{JJLS2014}. However, because of Remark \ref{rem:inaccuracy} and since the proof of \cite[Corollary 2.7]{JJLS2014} omits some of the details, we give a full proof for the convenience of the reader. We may clearly assume that $N=2$. Observe that the complement of $\{ \mathsf{A} \in GL_d(\R)^2 : \mathsf{A} \text{ satisfies the condition \ref{E1}} \}$ is
  \begin{equation*}
    \bigcup_{k=1}^d \bigcup_{(i_1,\ldots,i_k) \ne (j_1,\ldots,j_k)} \{ (A_1,A_2) \in GL_d(\R)^2 : \lambda_{i_1} \cdots \lambda_{i_k} = \lambda_{j_1} \cdots \lambda_{j_k} \text{ or } \lambda_{i_1}' \cdots \lambda_{i_k}' = \lambda_{j_1}' \cdots \lambda_{j_k}' \},
  \end{equation*}
  where $\lambda_1,\ldots,\lambda_d$ and $\lambda_1',\ldots,\lambda_d'$ are the eigenvalues of the matrices $A_1$ and $A_2$, respectively. To show that this complement is a finite union of $(2d^2-1)$-dimensional algebraic varieties it clearly suffices to show that $C = \{ (A_1,A_2) \in GL_d(\R)^2 : \lambda_{i_1} \cdots \lambda_{i_k} = \lambda_{j_1} \cdots \lambda_{j_k} \}$ is a $(2d^2-1)$-dimensional algebraic variety. Denoting
  \begin{equation*}
    g(\lambda) = \prod_{1 \le i_1 < \cdots < i_k \le d} (\lambda - \lambda_{i_1}\cdots\lambda_{i_k}),
  \end{equation*}
  we see that
  \begin{align*}
    C &= \{ \mathsf{A} \in GL_d(\R)^2 : \text{the polynomial $g$ has a multiple root} \} \\
    &= \{ \mathsf{A} \in GL_d(\R)^2 : \text{the discriminant of $g$ is $0$} \}.
  \end{align*}
  Note that the coefficients of $g$ can be expressed by the entries of $A_1$. As the discriminant of a polynomial is a symmetric function in the roots, it can be expressed in terms of the coefficients of the polynomial. Therefore, the discriminant of $g$ is a polynomial of the entries of $A_1$ and $C$ is a $(2d^2-1)$-dimensional algebraic variety.
  
  Let us next consider the condition \ref{E2}. Observe first that the complement of $\{ X \in GL_d(\R) : \text{all the minors of $X$ are nonzero} \}$ is
  \begin{align*}
    \mathcal{A} &= \bigcup_{k=1}^d \{ X \in GL_d(\R) : \text{there exists a zero minor of order $k$} \} \\
    &= \bigcup_{k=1}^d \bigcup_{\atop{1 \le i_1 < \cdots < i_k \le d}{1 \le j_1 < \cdots < j_k \le d}} \{ X \in GL_d(\R) : \det(X_{j_1,\ldots,j_k}^{i_1,\ldots,i_k}) = 0 \},
  \end{align*}
  where $X_{j_1,\ldots,j_k}^{i_1,\ldots,i_k}$ is the submatrix of $X$ corresponding to rows $i_1,\ldots,i_k$ and columns $j_1,\ldots,j_k$. Therefore, if $\mathsf{A}=(A_1,A_2) \in GL_d(\R)^2$ does not satisfy the condition \ref{E2}, then the matrix $X \in GL_d(\R)^2$ changing the eigenbases of $A_1$ and $A_2$ is contained in $\mathcal{A}$. Since the elements of $X$ are determined from $\mathsf{A}$ by some linear equations and the defining property of $X$ is a polynomial equation, we see that the complement of $\{ \mathsf{A} \in GL_d(\R)^2 : \mathsf{A} \text{ satisfies the condition \ref{E2}} \}$ is a finite union of $(2d^2-1)$-dimensional algebraic varieties.
  
  We have now finished the proof since the second claim trivially implies the first claim.
\end{proof}

\begin{remark} \label{rem:inaccuracy}
  There is a small inaccuracy in the statement of \cite[Corollary 2.7]{JJLS2014}. The corollary claims that a certain family of matrices satisfying the condition $C(s)$ is open. The proof, however, only verifies that this family contains an open set.
\end{remark}

\begin{proposition} \label{thm:eigenvalue-irred}
  If $\mathsf{A} \in GL_d(\R)^N$ satisfies the eigenvalue condition, then $\mathsf{A}$ is $s$-irreducible for all $0 \le s \le d$. In particular, for every $0 \le s \le d$ there exists a unique $\fii^s$-equilibrium state $\mu$ of $\mathsf{A}$ and it satisfies the following condition: there exists $C \ge 1$ depending only on $\mathsf{A}$ and $s$ such that
  \begin{equation*}
    C^{-1}e^{-nP_{\mathsf{A}}(\fii^s)}\fii^s(A_\iii) \le \mu([\iii]) \le Ce^{-nP_{\mathsf{A}}(\fii^s)}\fii^s(A_\iii)
  \end{equation*}
  for all $\iii \in \Sigma_*$.
\end{proposition}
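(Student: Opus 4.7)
\emph{Proof proposal.} Assume without loss of generality that $(A_1, A_2)$ satisfies the eigenvalue condition. By Lemma~\ref{thm:CS-cond}, it suffices to show that $\{A_\iii : \iii \in \Sigma_*\}$ satisfies $C(s)$ for each $s$; I will look for the required $\iii$ among words of the form $\iii = 2^m 1^n$, so that $A_\iii = A_2^m A_1^n$, parametrised by $(n, m) \in \N^2$. The integer endpoints reduce to irreducibility of $\mathsf{A}^{\wedge k}$ and are handled by a single-wedge version of the argument below.

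Fix $k < s < k+1$ and nonzero $v_1, w_1 \in \wedge^k\R^d$ and $v_2, w_2 \in \wedge^{k+1}\R^d$. By \ref{E1}, $A_1^{\wedge k}$ and $A_2^{\wedge k}$ diagonalise over $\C$ with $\binom{d}{k}$ distinct eigenvalues $\mu_I = \prod_{i \in I}\lambda_i$ and $\mu'_J = \prod_{j \in J}\lambda'_j$ indexed by $k$-subsets $I, J$, and an analogous statement holds at wedge power $k+1$. Expanding in the two eigenbases yields
\begin{equation*}
  f_1(n, m) := \la v_1, A_2^{m, \wedge k} A_1^{n, \wedge k} w_1 \ra_k = \sum_{I, J} c_{I, J}\, \mu_I^n (\mu'_J)^m,
\end{equation*}
where $c_{I, J}$ factors as the $I$-th eigencoordinate of $w_1$ (in the $A_1^{\wedge k}$-basis), the $J$-th dual eigencoordinate of $v_1$ (in the $A_2^{\wedge k}$-basis), and a $k \times k$ minor of the change-of-basis matrix from \ref{E2}. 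An analogous definition produces $f_2(n, m)$ at wedge $k+1$. By \ref{E2} the relevant minors are non-zero, and since $v_1, w_1 \ne 0$ at least one eigencoordinate of each is non-zero; hence the matrix $(c_{I, J})$ has a non-zero entry, and likewise for $f_2$. Thus $f_1$ and $f_2$ are non-trivial exponential polynomials on $\N^2$ with pairwise distinct frequencies.

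A standard Vandermonde argument -- fixing $n$ and exploiting the distinctness of the $m$-frequencies, then applying it in the $n$-variable -- shows that only a thin set of $n \in \N$ makes either $f_1(n, \cdot)$ or $f_2(n, \cdot)$ vanish identically. For any $n$ outside this set, both $f_1(n, \cdot)$ and $f_2(n, \cdot)$ are non-trivial one-variable exponential polynomials with sparse zero sets in $\N$, and any $m$ avoiding both yields the word $\iii = 2^m 1^n$ witnessing $C(s)$. The concluding assertions on uniqueness and the Gibbs bounds for the $\fii^s$-equilibrium state then follow from Theorem~\ref{thm:unique-eq-new} for non-integer $s$, and from Theorem~\ref{thm:FeKa} applied to $\mathsf{A}^{\wedge k}$ for integer $s = k$. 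The main anticipated obstacles are twofold: first, complex eigenvalues force the spectral argument into the complex diagonalisation whereas condition \ref{E2} is stated with the real invariant-plane basis, so verifying that the complex transition coefficients $\beta_{I, J}$ inherit non-vanishing from \ref{E2} demands a careful calculation through the real-to-complex basis change; second, the step ``simultaneously avoid the zero sets of $f_1$ and $f_2$'' is not fully automatic for arbitrary exponential polynomials and may require either enlarging the family of test words to $\iii = 2^{m_p} 1^{n_p} \cdots 2^{m_1} 1^{n_1}$ -- injecting enough independent parameters that the joint zero set cannot exhaust $\N^{2p}$ -- or a Zariski-density argument applied to the semigroup $\{A_\iii\}$.
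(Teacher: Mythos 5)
Your skeleton -- reduce $s$-irreducibility to condition $C(s)$ for the semigroup $\{A_\iii : \iii \in \Sigma_*\}$ via Lemma~\ref{thm:CS-cond}, expand the pairings in eigenbases of two of the matrices, and then invoke Theorem~\ref{thm:unique-eq-new} (and Theorem~\ref{thm:FeKa} at integer $s$) -- matches the paper's. But where the paper delegates the verification of $C(s)$ entirely to \cite[Theorem~2.6 and Corollary~2.7]{JJLS2014}, which establishes it for the finite family $\{A_\kkk : \kkk \in \bigcup_{k=1}^{d'}\{i,j\}^k\}$ with $d' = 2(\max_m\binom{d}{m})^2$, you attempt to reconstruct that verification from scratch using only two-letter words $2^m1^n$, and the reconstruction has a genuine gap which you partly flag yourself.

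The assertion that the one-variable sections $f_1(n,\cdot)$ and $f_2(n,\cdot)$ have ``sparse zero sets in $\N$'' is false in general. Condition~\ref{E1} makes the frequencies $\mu_I$ pairwise distinct, but it does not prevent a ratio $\mu_I/\mu_{I'}$ from equalling $-1$ (take $A_1$ with real eigenvalues $\tfrac12$ and $-\tfrac12$) or another root of unity (from a complex conjugate pair whose argument is a rational multiple of $\pi$). Then a nontrivial $\sum_I c_I\mu_I^n$ can vanish on an entire arithmetic progression: by Skolem--Mahler--Lech the zero set of such a sum is in general a finite union of arithmetic progressions plus a finite set, not a finite set. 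Consequently ``any $m$ avoiding both zero sets'' is not automatic -- the zero sets of $f_1(n,\cdot)$ and $f_2(n,\cdot)$ could a priori be complementary progressions covering all of $\N$, and nothing in the eigenvalue condition forbids this. The remedy you propose in your closing sentence, passing to longer test words $2^{m_p}1^{n_p}\cdots 2^{m_1}1^{n_1}$ to gain enough free parameters, is exactly how \cite{JJLS2014} resolves this; the bound $d'$ above is the word length they need, and the latter part of their proof of Corollary~2.7 handles your first flagged obstacle (tracking the nonvanishing of the transition coefficients through the real-to-complex basis change implicit in \ref{E2}). So you have correctly identified both the architecture of the argument and its two pressure points, but as written this is a sketch with a hole at the ``sparse zero set'' step, and that hole cannot be closed with two-letter words alone.
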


\begin{proof}
  Since $(A_i,A_j)$ satisfies the eigenvalue condition for some $i,j \in \{ 1,\ldots,N \}$ with $i \ne j$ it follows from \cite[Theorem 2.6]{JJLS2014} (and the latter part of the proof of \cite[Corollary 2.7]{JJLS2014} which deals with the case of complex eigenvalues) that the family $\{ A_\kkk : \kkk \in \bigcup_{k=1}^{d'} \{ i,j \}^k \}$, where $d' = 2(\max\{ \binom{d}{m} : m \in \{ 0,\ldots,d \} \})^2$, satisfies the condition $C(s)$ for all $0 \le s \le d$. By Lemma \ref{thm:CS-cond}, we see that $(A_i,A_j)$, and consequently also $\mathsf{A}$, is $s$-irreducible for all $0 \le s \le d$. The second claim follows now immediately from Theorem \ref{thm:unique-eq-new}.
\end{proof}

\begin{proof}[Proof of Theorem \ref{thm:schmeling-generic}]
  It follows immediately from Propositions \ref{thm:eigenvalue-cond} and \ref{thm:eigenvalue-irred} that the set
  $\{ \mathsf{A} \in GL_d(\R)^N :$ there exists a unique $\fii^s$-equilibrium state of $\mathsf{A}$ and it is fully supported$\}$
  contains a set in $GL_d(\R)^N$ which is open, dense, and of full Lebesgue measure. Furthermore, the complement of the set is contained in a finite union of $(d^2N-1)$-dimensional algebraic varieties. Therefore, for any $\mathsf{A}$ in this set, Proposition \ref{thm:pressure-drop} implies that $\dimaff(\mathsf{A}') < \dimaff(\mathsf{A})$. The result of Falconer \cite[Theorem 5.3]{Falconer1988} finishes the proof.
\end{proof}

\begin{ack}
  The authors thank Pablo Shmerkin for discussions related to the topic of the paper and Ian Morris for pointing out Remark \ref{rem:morris}.
\end{ack}


\end{document}